\newtheorem{theorem}{Theorem}[section]
\newtheorem*{theorem*}{Theorem}
\newtheorem{lemma}[theorem]{Lemma}
\newtheorem{proposition}[theorem]{Proposition}
\theoremstyle{remark}
\newtheorem{remark}[theorem]{Remark}
\theoremstyle{definition}
\newtheorem*{notation}{Notation}
\numberwithin{equation}{section}
\newcommand{\Pd}{\mathcal{P}_d}
\newcommand{\re}{\mathbb{R}}
\begin{document}

\title[The Stein-Wainger Oscillatory integral]{A sharp bound for the Stein-Wainger oscillatory integral}

 \author[Ioannis R. Parissis]{Ioannis R. Parissis}

\address{Department of Mathematics, University of Crete, Knossos Avenue 71409, Iraklio--Crete, Greece}

\email{ypar@math.uoc.gr}
\thanks{}

\subjclass[2000]{Primary 42A50; Secondary 42A45}



\begin{abstract}
Let $\Pd$ denote the space of all real polynomials of degree at most $d$. 
It is an old result of Stein and Wainger \cite{SWA} that 
$$\sup_ {P\in\Pd} \bigg|p.v.\int_\re {e^{iP(t)}\frac{dt}{t}}\bigg|\leq C_d$$ 
for some constant $C_d$ depending only on $d$. On the other hand, Carbery, Wainger and Wright in \cite{CWAW} claim that the true order of magnitude of the above principal value integral is $\log d$. We prove that 
$$ \sup_ {P\in\Pd}\bigg|p.v.\int_\re {e^{iP(t)}\frac{dt}{t}}\bigg|\sim \log{d}.$$ 
\end{abstract}

\maketitle

\section{Introduction}
Let $\Pd$ be the vector space of all real polynomials of degree at most $d$ in $\re$. For $P\in\Pd$ we consider the principal value integral 
$$I(P)=\bigg|p.v.\int_\re {e^{iP(t)}\frac{dt}{t}}\bigg|.$$ We wish to estimate the quantity $I(P)$ by a constant $C(d)$ depending only on the degree of the polynomial $d$. This amounts to estimating the integral 
$$I_{(\epsilon,R)}(P)=\bigg|\int_{\epsilon \leq |t| \leq R} {e^{iP(t)}\frac{dt}{t}}\bigg| $$
by some constant $C(d)$ independent of $\epsilon, R$ and $P$.

This problem is quite old and in fact has been answered some thirty years ago by Stein and Wainger in \cite{SWA} and \cite{WA}. They showed that the quantity $I(P)$ is bounded by a constant $C_d$ depending only on $d$. Their proof is very simple and uses a combination of induction and Van der Corput's lemma. Let us recall the latter since we'll also be using it in what follows.
\pagebreak
\begin{proposition}[van der Corput]\label{prop1}
Let $\phi:[a,b]\rightarrow\re$ be a $C^k$ function and suppose that $|\phi^{(k)}(t)|\geq 1$ for some $k\geq 1$ and all $t\in[a,b]$. If $k=1$ suppose in addition that $\phi^{\prime}$ is monotonic. Then, for every $\lambda \in \re$,

$$\bigg| \int _a ^b e^{i\lambda \phi(x)} dx \bigg | \leq \frac{Ck}{|\lambda|^{\frac{1}{k}}}$$ 
where $C$ is an absolute constant independent of a,b,k and $\phi$.
\end{proposition}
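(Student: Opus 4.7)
\medskip\noindent\textbf{Proof plan.} I would argue by induction on $k$. The base case $k=1$ is a single integration by parts: writing $e^{i\lambda\phi}=(i\lambda\phi')^{-1}\frac{d}{dx}e^{i\lambda\phi}$,
$$\int_a^b e^{i\lambda\phi(x)} dx=\left[\frac{e^{i\lambda\phi(x)}}{i\lambda\phi'(x)}\right]_a^b-\int_a^b e^{i\lambda\phi(x)}\frac{d}{dx}\!\left(\frac{1}{i\lambda\phi'(x)}\right)dx.$$
The boundary term is $O(|\lambda|^{-1})$ since $|\phi'|\geq 1$. Because $\phi'$, and hence $1/\phi'$, is monotonic, the derivative inside the remaining integral has constant sign, so that integral evaluates in absolute value to $|1/\phi'(b)-1/\phi'(a)|\leq 2$, contributing another $O(|\lambda|^{-1})$.

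For the inductive step, suppose the claim holds for $k-1$ and that $|\phi^{(k)}|\geq 1$. Continuity forces $\phi^{(k)}$ to have constant sign, so $\phi^{(k-1)}$ is strictly monotonic with $|(\phi^{(k-1)})'|\geq 1$. For a parameter $\delta>0$ to be optimized, let $B=\{x\in[a,b]:|\phi^{(k-1)}(x)|<\delta\}$; by monotonicity $B$ is an interval, and since $\phi^{(k-1)}$ varies by at most $2\delta$ on $B$ while its derivative is $\geq 1$ in absolute value, $|B|\leq 2\delta$. On each of the at most two intervals composing $[a,b]\setminus B$ one has $|\phi^{(k-1)}|\geq\delta$, so applying the inductive hypothesis to the rescaled phase $(\lambda\delta)(\phi/\delta)$ (whose $(k-1)$-th derivative is bounded below by $1$ in absolute value, and is monotonic when $k-1=1$) gives a contribution bounded by $C(k-1)|\lambda\delta|^{-1/(k-1)}$ on each piece. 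Combining the trivial bound on $B$ with the inductive bounds on the two good pieces and optimizing by choosing $\delta=|\lambda|^{-1/k}$ balances the two terms and yields the rate $|\lambda|^{-1/k}$.

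The main obstacle is the size of the constant. The naive recursion above gives $C(k)\leq 2+2C(k-1)$, hence $C(k)=O(2^k)$, which is weaker than the linear dependence $Ck$ advertised in the statement. Recovering the sharper constant demands more delicate bookkeeping at each inductive step---for instance, a more judicious choice of $\delta$ that turns the multiplicative loss into an additive one, or a direct argument exploiting the monotonicity of $\phi^{(k-1)}$ to iterate integration by parts---but the decay rate $|\lambda|^{-1/k}$ (which is what really drives the later applications of van der Corput in the paper) is already produced by the argument above.
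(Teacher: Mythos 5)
The paper does not actually prove Proposition~\ref{prop1}: it states the result and then cites \cite{S} for the version with $Ck$ replaced by an unspecified constant $C_k$, and \cite{AKC} for the sharper fact that $C_k$ can be taken linear in $k$. So there is no ``paper's proof'' to compare against; what you have supplied is essentially the argument from \cite{S}.

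Your induction is correct for what it delivers: the decay rate $|\lambda|^{-1/k}$ and the finiteness of $C_k$. You also correctly identify that the choice $\delta=|\lambda|^{-1/k}$ produces the recursion $C_k\le 2+2C_{k-1}$, hence $C_k=O(2^k)$, and you honestly flag that this does not give the linear constant $Ck$ asserted in the statement. That candor is good, but the final sentence of your plan---that the decay rate ``is what really drives the later applications''---is wrong for this particular paper, and the gap is not merely cosmetic. In the proof of Proposition~\ref{prop2} the lemma is applied with $k=2n^2-1$, the full degree of $P_n$, and with $\lambda$ essentially $c\,(2n^2-1)!\,n^3/4^{n^2}$ coming from Lemma~\ref{lm2}(ii). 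Since $(k!)^{1/k}\sim k/e$ by Stirling, the quantity $Ck/\lambda^{1/k}$ is bounded \emph{precisely because} the linear factor $k$ in the numerator cancels against $(k!)^{1/k}$ in the denominator; with the exponential constant $C\,2^k$ coming out of your recursion the same expression would be of size roughly $2^k/k$, which diverges, and the claim $\big|\int_1^R \sin P_n(t)\,\tfrac{dt}{t}\big|\le c$ would not follow. So the linear dependence on $k$ is not a refinement one can wave away; it is a load-bearing part of the statement as used here. To actually close the gap you would need the argument of \cite{AKC} (or an equivalent), which goes beyond the standard split-at-the-critical-set induction you have outlined.
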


For a proof of this very well known result with $C k$ replaced by $C_k$ see for example \cite{S}. A proof that the constant $C_k$ can be taken to be linear in $k$ can be found in \cite{AKC}.

On the other hand, Carbery, Wainger and Wright have conjectured in \cite{CWAW} that the true order of magnitude of the principal value integral is $\log d$. The main result of this paper is the proof of this conjecture. This is the content of:

\begin{theorem*}
There exist two absolute positive constants $c_1$ and $c_2$ such that
$$c_1 \log d \leq  \sup_ {P\in\Pd} \bigg|p.v.\int_\re {e^{iP(x)}\frac{dx}{x}}\bigg| \leq c_2 \log d .$$
\end{theorem*}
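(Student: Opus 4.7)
The plan is to establish the upper and lower bounds separately.

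For the upper bound, I would follow the Stein--Wainger dyadic strategy but track constants using the linear-in-$k$ van der Corput bound of Proposition~\ref{prop1}. Let $R>0$ be defined by $\max_k|c_k|R^k=1$, where $P(t)=\sum_{k=1}^{d}c_k t^k$ with $P(0)=0$. On $\{|t|\le R\}$, the symmetric pairing
\begin{equation*}
p.v.\!\int_{-R}^R \frac{e^{iP(t)}}{t}\,dt=\int_0^R\frac{e^{iP(t)}-e^{iP(-t)}}{t}\,dt
\end{equation*}
combined with $|e^{iP(t)}-e^{iP(-t)}|\le|P(t)-P(-t)|$ yields the harmonic bound $\le 2\sum_{k\text{ odd}}|c_k|R^k/k\le 2\sum_{k=1}^d 1/k\le 2\log d$ (since each $|c_k|R^k\le 1$). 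On $\{|t|\ge R\}$, a dyadic decomposition combined with Proposition~\ref{prop1} at the order of the monomial dominant on each dyadic interval --- exploiting both the linear-in-$k$ constant and Stirling's $(k!)^{1/k}\sim k/e$ --- yields a total that is $O(1)$, since contributions decay geometrically once past the transition scale and the transitions between dominant indices can be handled by a careful accounting. Thus $I(P)\le c_2\log d$.

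For the lower bound, the plan is to exhibit the explicit Bernstein-type polynomial
\begin{equation*}
P_0(t)=\frac{\pi}{2}\Bigl(1-\Bigl(1-\frac{t}{d}\Bigr)^{d}\Bigr)\in\Pd,\qquad P_0(0)=0,
\end{equation*}
and show $I(P_0)\gtrsim\log d$. Since $P_0(\pm t)\sim\pm(\pi/2)t$ near the origin, the integrand in $\int_0^\infty(e^{iP_0(t)}-e^{iP_0(-t)})/t\,dt$ is bounded there, and the main contribution comes from its imaginary part. For $t\in[\log d,d]$ the key estimate $(1-t/d)^d\le e^{-\log d}=1/d$ gives $P_0(t)\in[\pi/2-O(1/d),\pi/2]$, hence $\sin P_0(t)\ge 1-O(1/d^2)$ and $\int_{\log d}^d \sin P_0(t)\,dt/t\ge\log d-O(\log\log d)$. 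The reflected integral $\int_1^\infty e^{iP_0(-t)}/t\,dt$ is $O(1)$ by van der Corput of order $1$: the derivative $-(\pi/2)(1+t/d)^{d-1}$ is monotonic in $t$ and of magnitude at least $\pi e/2$ for $t\ge 1$, so dyadic summation gives $O(1)$. The ranges $[0,1]$ and $[d,\infty)$ each contribute $O(1)$ (the former by Taylor expansion; the latter by another van der Corput bound, since $|P_0'(t)|=(\pi/2)|1-t/d|^{d-1}$ grows very rapidly past its single zero at $t=d$). Combining, $I(P_0)\ge\log d-O(\log\log d)\ge c_1\log d$ for $d$ sufficiently large.

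The main obstacle is the lower-bound construction: producing a polynomial of degree only $d$ that stays close to the constant $\pi/2$ throughout a multiplicatively long range $[\log d,d]$. The Bernstein-type choice $P_0$ succeeds because $(1-t/d)^d$ is the natural polynomial approximation to $e^{-t}$, exponentially small on that range, while for $t>0$ the reflected factor $(1+t/d)^d$ grows super-polynomially, making $e^{iP_0(-t)}$ oscillate fast enough to cancel. A secondary subtlety on the upper-bound side is the careful accounting at transitions between dominant monomials; this is precisely where the linear-in-$k$ constant of Proposition~\ref{prop1} (rather than a generic $C_k$) is needed.
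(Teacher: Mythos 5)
Your lower bound construction is genuinely different from the paper's and, modulo some numerical slips, appears to work. The paper instead convolves an odd step function $f$ (equal to $\pm1$ away from $0$ and $\pm1$, with short linear transitions of width $1/n$) with a polynomial approximate identity $\phi_n(x)=c_n(1-x^2/4)^{n^2}$; the convolution $P_n$ is a polynomial of degree $2n^2-1$, and the paper shows $|I(P_n)-I(f)|=o(\log n)$ while $I(f)\geq c\log n$. Your explicit Bernstein-type polynomial $P_0(t)=\frac{\pi}{2}\bigl(1-(1-t/d)^d\bigr)$ avoids all of that machinery; it relies on $(1-t/d)^d\leq e^{-t}$ on $[0,d]$ for the main term, and on the fact that $(1+t/d)^{d-1}$ is roughly $e^t$ for $1\leq t\leq d$, so the reflected dyadic pieces over $\{t\geq1\}$ decay like $e^{-2^j}$ and sum to $O(1)$. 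The constant you quote, ``at least $\pi e/2$,'' is not literally correct since $(1+1/d)^{d-1}<e$, but it is bounded below by an absolute constant, which suffices. The ranges $[0,1]$ and $[d,\infty)$ are both $O(1)$ as you say (the block $[d,2d]$ is trivially at most $\log2$ and $[2d,\infty)$ falls to first-order van der Corput). This is a cleaner and more explicit route to the lower bound than the paper's.

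The upper bound, however, has a genuine gap, and it is exactly at the step you flag as a ``secondary subtlety.'' After the correct $\leq 2\log d$ bound on $\{|t|\leq R\}$, you assert that the outer integral over $\{|t|\geq R\}$ is $O(1)$ via dyadic decomposition, van der Corput at the order of the locally dominant monomial, and ``careful accounting at transitions.'' Two things go wrong. First, if $k_j$ is the index for which $|c_{k_j}|(2^jR)^{k_j}$ is maximal, the derivative $P^{(k_j)}(t)=\sum_{l\geq k_j}\frac{l!}{(l-k_j)!}c_l\,t^{l-k_j}$ has leading term $k_j!\,c_{k_j}$, but the higher-order terms satisfy only $\bigl|\frac{l!}{(l-k_j)!}c_l\,t^{l-k_j}\bigr|\leq\binom{l}{k_j}k_j!\,|c_{k_j}|$, which can overwhelm the leading term; so the lower bound $|P^{(k_j)}(t)|\geq c\,k_j!\,|c_{k_j}|$ needed to invoke Proposition~\ref{prop1} is not a consequence of dominance and is not established. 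Second, even assuming such a bound, the resulting dyadic sum is of the form $\sum_j 2^{-j\,k^*/k_j}$ where $k^*$ is the dominant index at $t=R$; when $k^*$ is small and the dominance shifts to an index near $d$, this sum is of size comparable to $d$, not $O(1)$, absent further structure. In short, the outer integral bound is the hard part, and the proposal does not prove it. The paper sidesteps this entirely with a different scheme: it sets $K_d=\sup_{P\in\Pd,\epsilon,R}\bigl|\int_{\epsilon\leq|t|\leq R}e^{iP(t)}\frac{dt}{t}\bigr|$, splits $P=Q+R$ with $\deg Q\leq[d/2]$, normalizes so that $\max_{j>[d/2]}|a_j|=1$, and then: on $\{|t|\leq1\}$ it replaces $e^{iP}$ by $e^{iQ}$ at cost $\sum_{j>[d/2]}1/j=O(1)$, getting $K_{[d/2]}+O(1)$; on $\{|t|\geq1\}$ it splits by a threshold $|P'(t)|\lessgtr\alpha$, uses first-order van der Corput on the large-derivative set (cost $O(d/\alpha)$), and bounds the logarithmic measure of the small-derivative set by $O(\alpha^{1/(d-1)})$ via Vinogradov's sublevel-set lemma (Lemma~\ref{lm4}) applied on dyadic blocks. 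Optimizing $\alpha$ yields $K_d\leq c+K_{[d/2]}$ and hence $K_d\leq c\log d$. You would need either to carry out the missing outer-integral estimate --- which amounts to a nontrivial lemma of its own --- or to adopt a halving recursion of this type.
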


\begin{remark} Suppose that $K$ is a $-n$ homogeneous function on $\re^n$, odd and integrable on the unit sphere. Then, by the one-dimensional result, we trivially get that there is an absolute positive constant $c$, such that:
$$\bigg|p.v.\int_{\re^n} e^{iP(x)}K(x)dx \bigg| \leq c  \|K\|_{L^1 (S^{n-1})}\log d, $$ 
for every polynomial $P$ on $\re^n$, of degree at most $d$.
\end{remark}
\begin{notation} We will use the letter $c$ to denote an absolute positive constant which might change even in the same line of text. Also, the notation $A\sim B$ means that there exist absolute positive constants $c_1$ and $c_2$ such that $c_1 B\leq A \leq c_2 B$. 
\end{notation}
\section{Aknowledgements}
I would like to thank James Wright for bringing this problem to our attention and for many helpful discussions. I would also like to thank Mihalis Papadimitrakis from the University of Crete, my thesis supervisor, for his constant support.
\section{The lower bound in the Theorem}
In this section we will construct a real polynomial $P$ of degree at most $d$ such that the inequality 
\begin{equation}\label{eq1}
I(P)=\bigg|p.v.\int_\re {e^{iP(t)}\frac{dt}{t}}\bigg| \geq c \log d
\end{equation}
holds. The general plan of the construction is as follows. We will first construct a \emph{function} $f$ (which will not be a polynomial) such that $I(f)\geq c \log n$. We will then construct a polynomial $P$ of degree $d=2n^2-1$ that approximates the function $f$ in a way that $|I(f)-I(P)|$ is small (small means $o(\log n)$ here). Since $\log n \sim \log d$ this will yield our result.			 

\begin{lemma}\label{lm1} For $n$ a large positive integer, let $f(t)$ be the continuous function which is equal to $1$ for $\frac{1}{n}\leq t \leq 1-\frac{1}{n}$, equal to $-1$ for  $-1+\frac{1}{n}\leq t \leq -\frac{1}{n}$, equal to $0$ for $|t|\geq1$ and linear in each interval $[-1,-1+\frac{1}{n}]$, $[-\frac{1}{n},\frac{1}{n}]$ and $[1-\frac{1}{n},1]$. 
Then,
\begin{equation}\label{eq2}
I(f)=\bigg|p.v.\int_\re {e^{if(t)}\frac{dt}{t}}\bigg| \geq c \log n.
\end{equation}
\end{lemma}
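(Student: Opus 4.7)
The plan is to exploit the oddness of $f$ to reduce the principal value to an elementary integral, then directly compute the contributions of the three pieces on which $f$ is piecewise affine.

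Since $f$ is odd, $\cos(f(t))$ is even and $\sin(f(t))$ is odd; splitting $e^{if(t)} = \cos(f(t)) + i\sin(f(t))$, the $\cos$-term contributes an odd integrand whose symmetric truncations vanish identically, while the $\sin$-term contributes an even integrand. Moreover $\sin(f(t))/t$ is bounded near $t=0$ (since $f$ is Lipschitz with $f(0)=0$, giving $|\sin(f(t))/t|\leq n$ there) and vanishes for $|t|\geq 1$. Hence the principal value is just an ordinary integral, and
\[
I(f) \;=\; 2\left|\int_0^1 \frac{\sin(f(t))}{t}\,dt\right|.
\]

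I would then split this integral into the three natural pieces dictated by the definition of $f$. On the plateau $[1/n,\,1-1/n]$ the integrand equals $\sin(1)/t$, contributing $\sin(1)\log(n-1)$, which is $\sim \log n$: this is the main term I want to keep. On the left ramp $[0,1/n]$ one has $f(t)=nt$, and the substitution $u=nt$ reduces the piece to $\int_0^1 \sin(u)/u\,du$, a fixed absolute constant. On the right ramp $[1-1/n,\,1]$, where $|\sin(f(t))|\leq \sin(1)$ and $1/t\leq 2$ for $n\geq 2$, the trivial absolute-value bound gives a contribution of size $O(1/n)$. Summing the three estimates yields $I(f) \geq 2\sin(1)\log(n-1) - C$, which exceeds $c\log n$ for all sufficiently large $n$.

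I do not anticipate any serious obstacle in this lemma; it is essentially a direct calculation, and the oscillatory cancellation that makes the general problem hard is completely absent here because after the symmetry reduction the integrand $\sin(f(t))/t$ is nonnegative on $(0,1)$. The only point requiring a sentence of care is the justification for passing from the principal value to the ordinary integral on $(0,1)$, which is handled by the oddness/evenness observation together with the Lipschitz bound near the origin noted above.
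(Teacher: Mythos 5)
Your proof is correct and follows essentially the same route as the paper: reduce by oddness to $2\int_0^1\sin(f(t))/t\,dt$, then extract the logarithmic main term from the plateau $[\tfrac{1}{n},1-\tfrac{1}{n}]$, with the ramp pieces handled trivially. Your observation that $\sin(f(t))/t\geq 0$ on $(0,1)$ is actually a slight simplification over the paper, which controls the ramp contributions via the triangle inequality rather than discarding them by sign; given that observation you do not even need the ``$-\,C$'' in your final displayed bound.
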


\begin{proof} The proof is more or less straightforward. 
\begin{eqnarray*}
I(f)&=&2\bigg|\int_0 ^1 \frac{\sin{f(t)}}{t}dt\bigg|\\&\geq& 2\bigg|\int_\frac{1}{n} ^{1-\frac{1}{n}} \frac{\sin{f(t)}}{t}dt\bigg| -
2\bigg|\int_0 ^\frac{1}{n} \frac{\sin{f(t)}}{t}dt\bigg|-2\bigg|\int_{1-\frac{1}{n}} ^1 \frac{\sin{f(t)}}{t}dt\bigg| \\
&\geq&2\sin1 \log(n-1)-2\int_0 ^\frac{1}{n}\frac{f(t)}{t}dt-2\int_{1-\frac{1}{n}} ^1\frac{f(t)}{t}dt\\
&=&2\sin1 \log(n-1)-2-2n\log\frac{n}{n-1}+2\\&\geq& 2\sin1 \log(n-1)-4\,\,\, \geq\,\,\, c\log n.
\end{eqnarray*}
\end{proof}

We now want to construct a polynomial which approximates the function $f$. We will do so by convolving the function $f$ with a "polynomial approximation to the identity". To be more specific, for $k\in\mathbb{N}$ and $x\in\re$ define the function
\begin{equation}\label{eq3}
\phi_k(x)  = c_k\bigg(1-\frac{x^2}{4}\bigg)^{k^2}
\end{equation}
where the constant $c_k$ is defined by means of the normalization 
\begin{equation}\label{eq4}
\int_{-2} ^2 \phi_k(x)dx =1 .
\end{equation}
Observe that
$$1=c_k\int_{-2} ^2 \bigg(1-\frac{x^2}{4}\bigg)^{k^2} dx=4c_k\int_0 ^1 (1-x^2)^{k^2} dx=2c_kB\bigg(\frac{1}{2},k^2+1\bigg),$$
where $B(\cdot,\cdot)$ is the beta function. Using standard estimates for the beta function we see that $c_k\sim k$.

Define, next, the functions $P_k$ in $\re$ as
\begin{equation}\label{eq5}
P_k(t)=\int_{-1} ^1 f(x)\phi_k(t-x)dx ,
\end{equation}
where $f$ is the function of Lemma \ref{lm1}. It is clear that the functions $P_k$ are \emph{polynomials} of degree at most $2k^2$. The following lemma deals with some technical issues concerning the polynomials $P_k$.
\medskip
\pagebreak
\begin{lemma}\label{lm2} Let $P_k$ be defined as in (\ref{eq5}) above.
\item(i) $P_k$ is an odd polynomial of degree $2k^2-1$ with leading coefficient $$a_k=(-1)^{k^2+1}\frac{2c_k k^2}{4^{k^2}}\bigg(1-\frac{1}{n}\bigg).$$ That is $$P_k(t)=a_k t^{2k^2-1} + \cdots \ .$$
\item(ii) As a consequence of (i) we have for all t $$|P^{(2k^2-1)}_k(t)|\geq c (2k^2-1)! \frac{k^3}{4^{k^2}}.$$	 
\item(iii) For $t\in
[-1,1]$ we have $$P_k(t)=\int_{0} ^2 \big(f(t+x)+f(t-x)\big)\phi_k(x) dx . $$
\end{lemma}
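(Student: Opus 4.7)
The starting point is that $f$ is odd and $\phi_k$ is an even polynomial of degree $2k^2$. My plan is to handle the three items in turn, exploiting these symmetries throughout.

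For (i), I would first prove that $P_k$ is odd by applying the substitution $x \mapsto -x$ in the defining integral (\ref{eq5}) and combining the parity of $f$ with that of $\phi_k$. Since $P_k(t)$ is evidently a polynomial in $t$ of degree at most $2k^2$, oddness forces the degree down to $2k^2 - 1$. To extract the leading coefficient, I would expand $\phi_k(t-x) = c_k\sum_{j=0}^{k^2}\binom{k^2}{j}(-1)^j(t-x)^{2j}/4^{j}$ and isolate the top-in-$t$ summand $j=k^2$, which contributes $c_k(-1)^{k^2}(t-x)^{2k^2}/4^{k^2}$; the coefficient of $t^{2k^2-1}$ in $(t-x)^{2k^2}$ is $-2k^2 x$. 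Integrating against $f$ then reduces everything to computing $\int_{-1}^{1} x f(x)\,dx$. Using that $xf(x)$ is even and splitting into the three linear pieces of $f$ gives the value $1 - 1/n$ after a routine calculation, which plugged back in produces exactly the stated formula for $a_k$.

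Item (ii) is an immediate consequence of (i): since $P_k$ is a polynomial of degree $2k^2 - 1$ with leading coefficient $a_k$, the $(2k^2-1)$th derivative is the constant $(2k^2-1)!\, a_k$, and the lower bound follows from $c_k \sim k$ (established earlier in the paper via the beta-function identity) together with the bound $1 - 1/n \geq 1/2$ valid for $n$ large.

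For (iii), I would substitute $u = t - x$ in (\ref{eq5}) to write $P_k(t) = \int_{t-1}^{t+1} f(t-u)\phi_k(u)\,du$, then split the integral at $u=0$, send $u \mapsto -u$ on the negative piece, and use that $\phi_k$ is even to obtain
\[
P_k(t) = \int_0^{1-t} f(t+u)\phi_k(u)\,du + \int_0^{1+t} f(t-u)\phi_k(u)\,du.
\]
Since $t\in[-1,1]$ forces both $1 \pm t \in [0,2]$, and $f$ is supported in $[-1,1]$, the integrands vanish on $[1-t,2]$ and $[1+t,2]$ respectively, so each upper limit can be pushed to $2$ free of charge. Combining the two integrals yields the claimed identity. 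The only step that is not purely formal is the piecewise evaluation of $\int_{-1}^1 xf(x)\,dx$ in part (i), which is the single place where the precise shape of $f$ (including the factor $1-1/n$) enters; everything else is bookkeeping with parity and support.
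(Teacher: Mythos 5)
Your proposal is correct and follows essentially the same line as the paper: oddness from the parity of $f$ and $\phi_k$, extraction of the leading coefficient via the binomial expansion of $\phi_k(t-x)$ with the top summand $j=k^2$ and the integral $\int_{-1}^1 x f(x)\,dx = 1-1/n$, (ii) as an immediate corollary using $c_k\sim k$, and (iii) by rewriting the convolution as $\int_{-2}^2 f(t-x)\phi_k(x)\,dx$ (your $u=t-x$ substitution plus the support observation is just a reparametrized version of the paper's step with $\chi_{[-2,2]}$) and folding by evenness of $\phi_k$.
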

\smallskip\begin{proof}(i) Using (\ref{eq5}) we have
\begin{eqnarray*}
P_k(-t)&=&\int_{-1} ^1 f(x)\phi_k(-t-x)dx\,\,\, =\,\,\, \int_{-1} ^1 f(x)\phi_k(t+x)dx\\
&=&\int_{-1} ^{1} f(-x)\phi_k(t-x)dx\,\,\, =\,\,\, -P_k(t).
\end{eqnarray*}

Next, from (\ref{eq5}) we have that
\begin{eqnarray*}
P_k(t)&=&c_k\int_{-1} ^{1} f(x) \sum_{m=0} ^{k^2} \binom{k^2}{m} \bigg( -\frac{(t-x)^2}{4} \bigg)^m dx\\
&=&c_k \sum_{m=0} ^{k^2} \binom{k^2}{m} \frac{(-1)^m}{4^m}\int_{-1}^1 f(x)	(t-x)^{2m} dx \\
&=&c_k \frac{(-1)^{k^2}}{4^{k^2}}\int_{-1} ^1 f(x)(x-t)^{2k^2} dx\\&+&
c_k \sum_{m=0} ^{k^2-1} \binom{k^2}{m} \frac{(-1)^m}{4^m}\int_{-1}^1 f(x)	(t-x)^{2m} dx .
\end{eqnarray*}
It is now easy to see that the two highest order terms come from the first summand in the above formula. Therefore,
\begin{eqnarray*}
P_k(t)&=&c_k \frac{(-1)^{3k^2}}{4^{k^2}}\int_{-1} ^1 f(x)dx\ t^{2k^2}-c_k \frac{(-1)^{k^2}2k^2}{4^{k^2}}\int_{-1} ^1 f(x)x dx\  t^{2k^2-1}+\cdot\cdot\ \\
&=&(-1)^{k^2+1}\frac{2c_k k^2}{4^{k^2}}\bigg(1-\frac{1}{n}\bigg)t^{2k^2-1} +\cdots.
\end{eqnarray*}
(ii) We just use the result of (i) and that $c_k\sim k$.

\noindent(iii) Fix a $t\in[-1,1]$. Then, 
\begin{eqnarray*}
\int_{-2} ^2 f(t-x)\phi_k(x)dx&=&\int_\re f(t-x) \phi_k(x) \chi_{[-2,2]}(x)dx \\
&=&\int_{-1}^1 f(x)\phi_k(t-x) \chi_{[-2,2]}(t-x)dx\\
&=&\int_{-1}^1 f(x)\phi_k(t-x) dx\\&=&P_k(t).
\end{eqnarray*}
However, since $\phi_k$ is even,
$$P_k(t)=\int_{-2} ^2 f(t-x)\phi_k(x)dx=\int_{0} ^2 \big(f(t+x)+f(t-x)\big)\phi_k(x)dx.$$
\end{proof}

We are now ready to prove the lower bound for $I(P)$.

\begin{proposition}\label{prop2} Let $P_n$ be the polynomial defined in (\ref{eq5}) where $n$ is the large positive integer used to define the function $f$ in Lemma \ref{lm1}. Then $P_n$ is a polynomial of degree $d=2n^2-1$ and
$$I(P_n)=\bigg|p.v.\int_\re e^{iP_n(t)}\frac{dt}{t}\bigg| \geq c \log d .$$
\end{proposition}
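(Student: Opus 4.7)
The plan is to show that $|I(P_n) - I(f)| = O(1)$, which combined with Lemma~\ref{lm1} and the relation $\log d \sim \log n$ (recall $d = 2n^2-1$) gives $I(P_n) \ge c\log d$. Since both $P_n$ (by Lemma~\ref{lm2}(i)) and $f$ are odd, using $e^{ig}-e^{-ig}=2i\sin g$ one has
\[ I(P_n) = 2\Bigl|\lim_{R\to\infty}\int_0^R \frac{\sin P_n(t)}{t}\,dt\Bigr|, \qquad I(f) = 2\int_0^1 \frac{\sin f(t)}{t}\,dt, \]
the latter requiring no principal value since $f$ vanishes off $[-1,1]$ and $\sin f(t)/t$ is bounded near $0$. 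By the reverse triangle inequality, it suffices to bound
\[ \mathrm{(I)} := \int_0^1 \frac{|\sin P_n(t)-\sin f(t)|}{t}\,dt \quad\text{and}\quad \mathrm{(II)} := \Bigl|\lim_{R\to\infty}\int_1^R \frac{\sin P_n(t)}{t}\,dt\Bigr| \]
each by an absolute constant.

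For (I), I would use $|\sin P_n-\sin f|\le|P_n-f|$ together with the representation from Lemma~\ref{lm2}(iii), which on $[-1,1]$ gives
\[ P_n(t)-f(t) = \int_0^2\bigl(f(t+x)+f(t-x)-2f(t)\bigr)\phi_n(x)\,dx, \]
combined with the pointwise bound $\phi_n(x)\le c_n e^{-n^2 x^2/4}$ (from $1-u\le e^{-u}$), which yields $\int_a^2 \phi_n \lesssim (na)^{-1}e^{-n^2 a^2/4}$. I would split $[0,1]$ into three zones. On $[0,2/n]$, both $f$ and $P_n$ vanish at $0$ and satisfy $|f'|,|P_n'|\le n$, hence $|P_n(t)-f(t)|\le 2nt$ and the contribution is at most $4$. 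On $[2/n,1-2/n]$, where $f\equiv 1$, the integrand $f(t\pm x)-f(t)$ is supported in $|x|>d(t)$ with $d(t)$ the distance to the nearest kink of $f$; the substitution $s=n\,d(t)$ reduces the estimate on this zone to $\int_1^\infty e^{-s^2/4}/(s(s+1))\,ds + O(1/n) = O(1)$. Finally the endpoint strip $[1-2/n,1]$ has length $O(1/n)$ with $1/t=O(1)$, contributing $O(1/n)$.

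For (II), I would apply Proposition~\ref{prop1} with $k=2n^2-1$ to the uniform lower bound $|P_n^{(2n^2-1)}(t)|\ge \lambda_0 := c(2n^2-1)!\,n^3/4^{n^2}$ from Lemma~\ref{lm2}(ii). Writing $P_n=\lambda_0\phi$ with $|\phi^{(k)}|\ge 1$, van der Corput gives $\bigl|\int_a^b e^{iP_n(t)}\,dt\bigr|\le Ck/\lambda_0^{1/k}$ uniformly in $1\le a<b$, and Stirling yields $\lambda_0^{1/(2n^2-1)}\sim (2n^2-1)/(2e)$, so this bound is $O(1)$. Setting $V(t):=\int_1^t \sin P_n(s)\,ds$ and integrating by parts,
\[ \int_1^R\frac{\sin P_n(t)}{t}\,dt = \frac{V(R)}{R}+\int_1^R\frac{V(t)}{t^2}\,dt, \]
the uniform bound $\|V\|_\infty=O(1)$ together with $\int_1^\infty dt/t^2<\infty$ yield (II) $=O(1)$ after letting $R\to\infty$.

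The delicate part is (I): the crude bound $|P_n-f|\le 2$ against the weight $dt/t$ would cost a full factor of $\log n$ on the middle zone and ruin the argument, so one must exploit the strong concentration of $\phi_n$ (effective width $\sim 1/n$ with Gaussian tails) quantitatively via the convolution representation. The van der Corput step in (II) is almost mechanical once Lemma~\ref{lm2}(ii) is in hand: the exponent $k^2$ (rather than just $k$) in the definition~(\ref{eq3}) of $\phi_k$ is chosen precisely so that Stirling balances the factor $k$ in van der Corput against $\lambda_0^{1/k}\sim k/(2e)$, producing an absolute constant.
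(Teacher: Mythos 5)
Your proposal is correct and, at the structural level, follows exactly the strategy the paper uses: split the principal value integral at $|t|=1$, control the tail $\int_1^R\sin P_n(t)\,dt/t$ via Lemma~\ref{lm2}(ii) and van der Corput, and compare the piece on $[0,1]$ to $I(f)$ via $|\sin P_n-\sin f|\le|P_n-f|$ combined with the convolution representation of Lemma~\ref{lm2}(iii). Your treatment of (II) (normalize the derivative, invoke Proposition~\ref{prop1} with $k=2n^2-1$, Stirling to see $\lambda_0^{1/k}\sim k/(2e)$, then a second-mean-value/integration-by-parts argument) is precisely the ``standard application of Van der Corput'' the paper alludes to without spelling out, and it is correct.

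The one place you genuinely depart from the paper is the estimate of $\int_0^1|P_n(t)-f(t)|\,dt/t$. The paper relegates this to Lemma~\ref{lm3}, which splits the double integral $\int_0^2\int_0^1 A(x,t)\,\phi_n(x)\,dt\,dx/t$ into seven regions and, for the one delicate region, only extracts the bound $o(\log n)$ by a $\limsup$-and-let-$\alpha\to 1$ argument. Your three-zone decomposition of the $t$-interval, together with the pointwise Gaussian majorant $\phi_n(x)\le c_n e^{-n^2x^2/4}$ and the substitution $s=n\,d(t)$ producing the convergent integral $\int_1^\infty e^{-s^2/4}\,ds/(s(s+1))$, actually gives the stronger conclusion $|I_1(P_n)-I(f)|=O(1)$. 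This is a real (if modest) sharpening of the paper's Lemma~\ref{lm3}: the delicate middle region is in fact bounded, not merely sublogarithmic. Of course $o(\log n)$ already suffices for the Proposition, so the improvement does not change the Theorem, but your version is cleaner and avoids the paper's seven-way split. All the ingredients you rely on---the Lipschitz bound $|P_n'|\le n$ via $P_n'=f'*\phi_n$, the vanishing of $A(x,t)$ for $x<d(t)$ in the middle zone, the Gaussian tail of $\phi_n$---check out.
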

\smallskip
\begin{proof} Since $P_n$ is odd,
$$I(P_n)=2 \bigg|\int _0 ^{+\infty}\frac{\sin{P_n(t)}}{t}dt\bigg| ,$$
and it suffices to show that for all $R\geq1$
\begin{equation}\label{eq6}
\bigg|\int _0 ^R\frac{\sin{P_n(t)}}{t}dt\bigg| \geq c \log d\sim c\log n.
\end{equation}
By part (ii) of Lemma \ref{lm2} and a standard application of Proposition \ref{prop1} (Van der Corput) we see that
$$\bigg|\int _1 ^R\frac{\sin{P_n(t)}}{t}dt\bigg| \leq c$$
for all $R\geq 1$. As a result, the proof will be complete if we show that
\begin{equation}\label{eq7}
I_1(P_n)=\bigg|\int _0 ^1\frac{\sin{P_n(t)}}{t}dt\bigg| \geq c\log n.
\end{equation}
Using Lemma \ref{lm1} and the triangle inequality we get 
\begin{equation}\label{eq8}
I_1(P_n)\geq c \log n -|I_1(P_n)-I(f)|
\end{equation}
and, in order to show (\ref{eq7}), it suffices to show that
\begin{equation}\label{eq9}
|I_1(P_n)-I(f)|=o(\log n).
\end{equation}

We have that
\begin{eqnarray*}
|I_1(P_n)-I(f)|&=& \bigg|\int_0 ^1 \frac{\sin P_n(t)-\sin f(t)}{t}dt   \bigg|\\ 
&\leq& \int_0 ^1 \frac{| P_n(t)-f(t)|}{t}dt. 
\end{eqnarray*}
Using part (iii) of Lemma \ref{lm2} and (\ref{eq4}), we get
$$|P_n(t)-f(t)|\leq \int _0 ^2 |f(t+x)+f(t-x)-2f(t)|\phi_n(x) dx $$
for $0\leq t \leq 1$.
Hence
$$|I_1(P_n)-I(f)|\leq \int_0 ^2 \int _0 ^1 \frac{|f(t+x)+f(t-x)-2f(t)|}{t}\  dt \ \phi_n(x) dx .$$
Now, the desired result, condition (\ref{eq9}), is the content of the following lemma.\end{proof}
\begin{lemma}\label{lm3} Let $A(x,t)=|f(t+x)+f(t-x)-2f(t)|$. Then,
$$ \int_0 ^2 \int _0 ^1 \frac{A(x,t)}{t} \ dt  \ \phi_n(x) dx = o(\log n).$$
\end{lemma}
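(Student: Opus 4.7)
The plan is to exploit the concentration of $\phi_n$ on scale $1/n$ (with Gaussian-type bound $\phi_n(x)\le cn\,e^{-n^2x^2/4}$ in the relevant range) together with the piecewise-linear structure of $f$. The structural observation is that, writing $g(x,t)=f(t+x)+f(t-x)-2f(t)$, one has $A(x,t)=|g(x,t)|=0$ whenever $[t-x,t+x]$ lies inside one linear piece of $f$; equivalently, $A(x,\cdot)$ is supported in the $x$-neighborhoods of the six kinks $K=\{\pm1,\pm(1-1/n),\pm 1/n\}$. The oddness of $f$ also gives $A(x,0)=0$, which tames the apparent $1/t$-singularity at $t=0$.

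I would split the outer integral into three regimes according to the size of $x$. In the inner regime $x\in[0,1/n]$, only the kinks $1/n,\ 1-1/n,\ 1$ are reachable from $t\in[0,1]$, and on the $2x$-neighborhood of each, $A(x,t)\le cnx$ (the slope jump is at most $n$ and the argument of $f$ moves by at most $x$). The dominant contribution is at $1/n$, where $1/t\sim n$ throughout the neighborhood, giving a local integral of size at most $nx\cdot x\cdot n=n^2x^2$; the kinks at $1-1/n$ and $1$ contribute only $O(nx^2)$. Hence $\int_0^1 A(x,t)/t\,dt\le cn^2x^2$, and since $\phi_n\le cn$ on this range, the total contribution is at most $c\int_0^{1/n}n^3x^2\,dx=O(1)$.

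The intermediate regime $x\in[1/n,1/2]$ is the heart of the matter. For $x\in[2/n,1/2]$ a direct case analysis of $g$ shows $g(x,t)=-2nt$ on $[0,1/n]$, $g(x,t)=-2$ on the plateau $[1/n,\ x-1/n]$, $g$ decays linearly back to $0$ on $[x-1/n,\ x+1/n]$, vanishes on $[x+1/n,\ 1-1/n-x]$, and contributes only $O(1)$ in the boundary pieces near $t=1-1/n$ and $t=1$. The plateau costs the integral $\int_{1/n}^{x-1/n}2/t\,dt=2\log(nx-1)$, and summing all pieces yields $\int_0^1 A(x,t)/t\,dt\le c\log(nx)$. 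After substituting $y=nx$ and invoking the bound on $\phi_n$, this regime contributes at most $c\int_1^\infty(\log y)\,e^{-y^2/4}\,dy=O(1)$; the transitional band $x\in[1/n,2/n]$ is handled by an analogous piecewise computation and yields only $O(1)$ as well. In the far regime $x\in[1/2,2]$, $\phi_n(x)\le cn(15/16)^{n^2}$ is super-exponentially small, so the trivial bound $\int_0^1 A(x,t)/t\,dt\le c\log n$ (obtained from $A\le\min(4,4nt)$) produces only $o(1)$.

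The main obstacle is the bookkeeping in the intermediate regime: one has to track $g(x,t)$ as $t$ crosses each kink of $f$, and in particular verify that the boundary pieces near $t=1-1/n$ and $t=1$ (where $f$ itself has kinks) only contribute $O(1)$ rather than secretly producing an extra factor of $\log n$. Once this is done, summing the three regimes gives $O(1)$, which is much stronger than the required $o(\log n)$.
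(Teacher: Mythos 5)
Your argument is correct, and it follows a genuinely different route from the paper. The paper splits $\int_0^2\int_0^1$ into seven $(x,t)$-regions guided by the two bounds $A\le 4\min(nx,nt,1)$ and $A=0$ for $\frac1n\le t-x\le t+x\le 1-\frac1n$; the decisive region is $\int_{1/n}^{1/2-1/n}\int_{1/n}^{x+1/n}$, which is reduced to $\int_{1/n}^1\log(nx+1)\phi_n(x)\,dx$, and the paper then splits at $n^{-\alpha}$ and lets $\alpha\to 1^-$ to obtain $o(\log n)$. You instead organize the estimate by the size of $x$, derive a pointwise bound $\int_0^1 A(x,t)\,t^{-1}\,dt\le c+c\log_+(nx)$ by tracking $g(x,t)=f(t+x)+f(t-x)-2f(t)$ across the kinks, and combine it with the Gaussian-type bound $\phi_n(x)\le c\,n\,e^{-n^2x^2/4}$ (valid since $(1-u)^m\le e^{-mu}$ and $c_n\sim n$). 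After the substitution $y=nx$ this yields $\int_0^\infty(1+\log_+ y)e^{-y^2/4}\,dy=O(1)$, so you get the stronger conclusion that the double integral is $O(1)$, not merely $o(\log n)$; the paper's weaker statement is simply an artifact of its $\alpha$-split. Both proofs rest on the same structural inputs (the support and size estimates for the second difference $A$, and the exponential decay of $\phi_n$ away from the origin), so your proof buys a cleaner decomposition and a sharper endpoint result, at the cost of a more detailed case analysis of $g$ in the band $x\in[1/n,1/2]$.

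One small imprecision worth flagging: in your inner regime $x\in[0,1/n]$ you write ``$1/t\sim n$ throughout the $2x$-neighborhood of $1/n$,'' which fails when $x$ is close to $1/n$ (there $t$ can be near $0$). The clean fix is to use $A\le 4\min(nt,nx)$, which gives $A/t\le 4n$ on that neighborhood, hence a local contribution $\le 8nx=O(1)$; combined with $\int_0^{1/n}\phi_n\le 1$ this still yields $O(1)$ for the inner regime. The stated $O(n^2x^2)$ bound does in fact hold with a larger constant, so your conclusion is unaffected.
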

\smallskip
\begin{proof} Firstly, it is not difficult to establish that
\begin{eqnarray}
A(x,t)&\leq& 4 \min(nx,nt,1) \label{eq10}\\
A(x,t)&=&0,\qquad \mbox{when} \ \ \frac{1}{n}\leq t-x \leq t+x \leq 1-\frac{1}{n} \ .\label{eq11}
\end{eqnarray}
Indeed, 
\begin{eqnarray*}
A(x,t)&\leq&|f(t+x)-f(t)|+|f(t-x)-f(t)| \\
&\leq& nx +nx\,\,\, \leq\,\,\, 2nx .
\end{eqnarray*}
On the other hand, 
\begin{eqnarray*}
A(x,t)&=&|f(t+x)-f(x)+f(t-x)-f(-x)-2f(t)| \\
&\leq& |f(t+x)-f(x)|+|f(t-x)-f(-x)|+2|f(t)| \\
&\leq& nt+nt+2nt\,\,\, =\,\,\, 4nt .
\end{eqnarray*}
Inequality (\ref{eq10}) now follows by the fact that $|f|$ is bounded by 1 and (\ref{eq11}) is trivial to prove.

We split the integral $\int_0 ^2 \int_0 ^1 \cdots dtdx$ into seven integrals:
\begin{eqnarray*}
&&\int_0^2 \int _\frac{1}{2} ^1 \cdots dtdx +\int_0^\frac{1}{n}\int_0^x \cdots dtdx+\int_\frac{1}{n}^2\int_0^\frac{1}{n} \cdots dtdx+\int_0^\frac{1}{n}\int_x ^{x+\frac{1}{n}} \cdots dtdx\ \\ &&+\int_0 ^{\frac{1}{2}-\frac{1}{n}}\int_{x+\frac{1}{n}} ^{\frac{1}{2}} \cdots dtdx+\int_\frac{1}{n} ^{\frac{1}{2}-\frac{1}{n}} \int_\frac{1}{n} ^{x+\frac{1}{n}} \cdots dtdx+ \int_{\frac{1}{2}-\frac{1}{n}} ^2 \int_\frac{1}{n} ^\frac{1}{2} \cdots dtdx.
\end{eqnarray*}
We estimate each of the seven integrals separately.
$$\int_{0} ^2 \int _\frac{1}{2} ^1 \frac{A(x,t)}{t} dt  \phi_n(x) dx\,\,\, \leq\,\,\, 4\log 2\int_0 ^2 \phi_n(x) dx\,\,\, =\,\,\, 2 \log 2.$$

\begin{eqnarray*}\int_0 ^\frac{1}{n} \int_0 ^x \frac{A(x,t)}{t}dt \phi_n(x)dx&\leq& \int_0 ^\frac{1}{n} \int_0 ^x \frac{4nt}{t}dt \phi_n(x)dx \\ &=& \int_0 ^\frac{1}{n} 4nx \phi_n(x)dx\,\,\, \leq\,\,\, 2 .
\end{eqnarray*}
\begin{eqnarray*}\int_\frac{1}{n} ^2 \int_0 ^\frac{1}{n} \frac{A(x,t)}{t}dt \phi_n(x)dx&\leq& \int_\frac{1}{n} ^2 \int_0 ^\frac{1}{n} \frac{4nt}{t}dt \phi_n(x)dx \\ &=& \int_0 ^\frac{1}{n} 4 \phi_n(x)dx\,\,\, \leq\,\,\, 2 .
\end{eqnarray*}

\begin{eqnarray*}
\int _0 ^\frac{1}{n} \int _{x} ^{x+\frac{1}{n}}\frac{A(x,t)}{t}dt\phi_n(x)dx&\leq&\int _0 ^\frac{1}{n} \int _{x} ^{x+\frac{1}{n}}\frac{4nx}{t}dt\phi_n(x)dx \\
&=&\int _0 ^\frac{1}{n} 4nx\log\bigg(1+\frac{1}{nx}\bigg)\phi_n(x)dx\,\,\, \leq\,\,\, 2.
\end{eqnarray*}

For $\int_0 ^{\frac{1}{2}-\frac{1}{n}}\int_{x+\frac{1}{n}} ^\frac{1}{2}$ we have $\frac{1}{n}\leq t-x\leq t+x \leq 1-\frac{1}{n}$ and, by (\ref{eq11}), $A(x,t)=0$. Hence
$$\int_0 ^{\frac{1}{2}-\frac{1}{n}}\int_{x+\frac{1}{n}} ^\frac{1}{2}\frac{A(x,t)}{t}dt\phi_n(x)dx=0.$$
Next
\begin{eqnarray*}
\int _\frac{1}{n} ^{\frac{1}{2}-\frac{1}{n} }\int _\frac{1}{n}^{x+\frac{1}{n}}\frac{A(x,t)}{t}dt\phi_n(x)dx &\leq&
\int _\frac{1}{n} ^{\frac{1}{2}-\frac{1}{n} }\int _\frac{1}{n}^{x+\frac{1}{n}}\frac{4}{t}dt\phi_n(x)dx\\
&\leq&4\int _\frac{1}{n} ^1 \log(nx+1)\phi_n(x)dx.
\end{eqnarray*}
Now, fix some $\alpha\in(0,1)$. Write 
\begin{eqnarray*}
\int _\frac{1}{n} ^1 \log(nx+1)\phi_n(x)dx&=&\int_\frac{1}{n} ^\frac{1}{n^\alpha}\cdots\, dx+\int_\frac{1}{n^\alpha} ^1\cdots \, dx\\
&\leq&\frac{\log(n^{1-\alpha}+1)}{2} + c_n\log (n+1) \int_\frac{1}{n^\alpha} ^1\bigg(1-\frac{x^2}{4} \bigg)^{n^2}dx\\
&\leq&\frac{\log(n^{1-\alpha}+1)}{2}+ c n \log (n+1) \ e^{-\frac{1}{4}n^{2(1-\alpha)}}.
\end{eqnarray*}
Therefore,
$$\limsup_{n\rightarrow \infty}\frac{\int _\frac{1}{n} ^1 \log(nx+1)\phi_n(x)dx}{\log n}\leq \frac{1-\alpha}{2}$$
and, since $\alpha$ is arbitrary in $(0,1)$,$$\int _\frac{1}{n} ^{\frac{1}{2}-\frac{1}{n} }\int _\frac{1}{n}^{x+\frac{1}{n}}\frac{A(x,t)}{t}dt\phi_n(x)dx=o(\log n).$$
Finally,
\begin{eqnarray*}
\int _{\frac{1}{2}-\frac{1}{n}} ^2 \int _\frac{1}{n}^{\frac{1}{2}}\frac{A(x,t)}{t}dt\phi_n(x)dx &\leq&
\int _{\frac{1}{2}-\frac{1}{n}} ^2 \int _\frac{1}{n}^{\frac{1}{2}}\frac{4}{t}dt\phi_n(x)dx  \\
&\leq&4\log\frac{n}{2}\  c_n \int _{\frac{1}{2}-\frac{1}{n}} ^2 \bigg(1-\frac{x^2}{4} \bigg)^{n^2}dx\\
&\leq&cn\log n e^{-\frac{1}{16}n^2}\,\,\, =\,\,\, o(1).
\end{eqnarray*}

\end{proof}
\section{The upper bound in the Theorem}

We set 
\begin{equation}\label{eq12}
K_d=\sup_{P\in\Pd,\epsilon,R}\bigg|\int_{\epsilon \leq |t| \leq R} {e^{iP(t)}\frac{dt}{t}} \bigg|.
\end{equation}
We take any polynomial $P$, of degree at most $d$, which we can assume has no constant term, that is, $P(0)=0$. We set $k=[\frac{d}{2}]$ and we write
\begin{eqnarray*}P(t)&=&a_1t+a_2t^2+\cdots+a_k t^k+a_{k+1}t^{k+1}+\cdots+a_dt^d \\
                     &=&Q(t)+R(t),
\end{eqnarray*}
where $Q(t)=a_1t+a_2t^2+\cdots+a_k t^k$ and $R(t)=a_{k+1}t^{k+1}+\cdots+a_dt^d$. Let $|a_l|=max_{k+1\leq j\leq d}|a_j	|$ for some $k+1\leq l \leq d$. By a change of variables in the integral in (\ref{eq12}) we can assume that $|a_l|=1$ and thus that $|a_j|\leq 1$ for every $k+1\leq j \leq d$. Now split the integral in (\ref{eq12}) in two parts as follows
\begin{eqnarray}\label{eq13}
\bigg|\int_{\epsilon \leq |t| \leq R} {e^{iP(t)}\frac{dt}{t}}\bigg| &\leq& \bigg|\int _{\epsilon\leq |t| \leq 1}{e^{iP(t)}\frac{dt}{t}}\bigg| + \bigg|\int _{1\leq |t| \leq R}{e^{iP(t)}\frac{dt}{t}}\bigg|\\ & = & I_1+I_2. \nonumber
\end{eqnarray}
For $I_1$ we have that

\begin{eqnarray*}
I_1 &\leq&  \bigg| \int_{\epsilon \leq |t| \leq 1} \big[e^{iP(t)}-e^{iQ(t)}\big]\frac{dt}{t}\bigg|+\bigg|\int_{\epsilon \leq |t| \leq 1} {e^{iQ(t)}\frac{dt}{t}}\bigg| \\
&\leq& \int_{\epsilon \leq |t| \leq 1} \big|e^{iP(t)}-e^{iQ(t)}\big|\frac{dt}{t}+K_{[\frac{d}{2}]} \\
&\leq &\int_{0 \leq |t| \leq 1} \frac{|R(t)|}{t}dt+K_{[\frac{d}{2}]}\\
&\leq & 2\sum_{j=k+1}^d \frac{|a_j|}{j}+K_{[\frac{d}{2}]}\,\,\, \leq\,\,\, \sum_{j=k+1}^d \frac{1}{j}+K_{[\frac{d}{2}]}\,\,\, \leq\,\,\, c +K_{[\frac{d}{2}]}.
\end{eqnarray*}

For the second integral in (\ref{eq13}) we have that
\begin{eqnarray*}
I_2\leq  \bigg|\int _{1\leq t \leq R}{e^{iP(t)}\frac{dt}{t}}\bigg|  + \bigg|\int _{-R\leq t \leq -1}{e^{iP(t)}\frac{dt}{t}}\bigg|=I_2 ^+ +  I_2 ^- .
\end{eqnarray*}
For some $\alpha >0$ to be defined later split $I_2 ^+$ into two parts as follows:
\begin{eqnarray*}
I_2 ^+ \leq \int_{\{t\in[1,+\infty):|P^\prime(t)|\leq \alpha\}}  \frac{dt}{t} + \bigg|\int _{\{t\in[1,R]:|P^\prime(t)| > \alpha \}}  e^{iP(t)} \frac{dt}{t}\bigg|.
\end{eqnarray*}

Since $\{t\in[1,R]:|P^\prime(t)| > \alpha\}$ consists of at most $O(d)$ intervals where $P^{\prime}$ is monotonic, using Proposition 1 we get the bound
$$\bigg|\int _{\{t\in[1,R]:|P^\prime(t)| > \alpha\} }e^{iP(t)}\frac{dt}{t}\bigg|\leq c \frac{d}{\alpha}.$$

For the logarithmic measure of  the set $\{t\in[1,+\infty):|P^\prime(t)| \leq \alpha\}$, observe that
\begin{eqnarray*}
\int_{\{t\in[1,+\infty):|P^\prime(t)|\leq \alpha\}}\frac{dt}{t}&\leq& \sum_{m=0}^\infty \int_{\{t\in[2^m,2^{m+1}]:|P^\prime(t)|\leq \alpha\}}\frac{dt}{t}\\&=&
\sum_{m=0}^\infty \int_{\{2^mt\in[2^m,2^{m+1}]:|P^\prime(2^mt)|\leq \alpha\}}\frac{dt}{t} \\
&=&\sum_{m=0}^\infty \int_{2^m\{t\in[1,2]:|P^\prime(2^mt)|\leq \alpha\}}\frac{dt}{t}\\& =&\sum_{m=0}^\infty \int_{\{t\in[1,2]:|P^\prime(2^mt)|\leq \alpha\}}\frac{dt}{t}.
\end{eqnarray*} 
We have thus showed that
\begin{eqnarray}\label{eq14}
\int_{\{t\in[1,+\infty):|P^\prime(t)| \leq \alpha\}}\frac{dt}{t}\leq  \sum_{m=0}^\infty \ |\{t\in[1,2]:|P^\prime(2^mt)|\leq \alpha\}|.
\end{eqnarray}

In order to finish the proof we need a suitable estimate for the sublevel set of a polynomial. This is the content of the following lemma.

\begin{lemma}[Vinogradov]\label{lm4} Let $h(t)=b_0+b_1 t+\cdots+b_n t^n$ be a real polynomial of degree n. Then,
$$|\{t\in[1,2]:|h(t)|\leq \alpha\}| \leq c \bigg(\frac{\alpha}{\max_{0\leq k \leq n} |b_k|}\bigg)^\frac{1}{n}.$$
\end{lemma}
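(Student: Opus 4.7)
The plan is a direct Lagrange interpolation argument. Normalizing by $M:=\max_k|b_k|$ reduces the claim to showing $|E|\leq c\,\alpha^{1/n}$ whenever $\max_k|b_k|=1$, where $E=\{t\in[1,2]:|h(t)|\leq\alpha\}$. To select well-spaced interpolation nodes I would set $s=|E|$ and use the non-decreasing, $1$-Lipschitz distribution function $m(x)=|E\cap[1,x]|$: taking $x_j$ with $m(x_j)=js/n$ for $j=0,1,\ldots,n$ produces $n+1$ points in $E$ with
$$|x_j-x_k|\geq |j-k|\,s/n.$$
This spacing is what drives the whole argument.

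Next, I apply Lagrange interpolation at these nodes,
$$h(t)=\sum_{j=0}^{n}h(x_j)\prod_{k\neq j}\frac{t-x_k}{x_j-x_k},$$
and read off the coefficient of $t^m$. Since $|h(x_j)|\leq\alpha$, this gives
$$|b_m|\leq\alpha\sum_{j=0}^{n}\frac{|e_{n-m}(\{x_k:k\neq j\})|}{\prod_{k\neq j}|x_j-x_k|},$$
where $e_{n-m}$ denotes an elementary symmetric polynomial. The spacing bound yields $\prod_{k\neq j}|x_j-x_k|\geq j!(n-j)!(s/n)^n$, while $x_k\in[1,2]$ gives $|e_{n-m}|\leq\binom{n}{n-m}\,2^{n-m}$. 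Collapsing the sum using $\sum_j\binom{n}{j}=2^n$ and Stirling's bound $n^n/n!\leq e^n$ produces $|b_m|\leq \alpha(C/s)^n$ for an absolute constant $C$, uniformly in $m$. Taking the maximum over $m$ and invoking $\max_m|b_m|=1$ delivers $s\leq C\,\alpha^{1/n}$, which is the claimed inequality.

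The one subtle point is ensuring that each chosen $x_j$ actually lies in $E$, not merely in $\bar E$. The preimage $m^{-1}(js/n)$ can be an interval $[a,b]$ on which $m$ is constant, corresponding to a gap of $E$; however, since $h$ is continuous $E$ is closed, and taking $x_j$ to be the right endpoint of any such plateau places it in $E$ without affecting the spacing estimate. Beyond this minor bookkeeping, the calculation is routine, and the main obstacle is simply tracking how the exponentially large factors $4^n$, $2^n/n!$, and $(n/s)^n$ combine to give an absolute $C$: the substantive content is the $1/n$-exponent, which emerges from the pigeonhole spacing $s/n$ appearing to the $n$-th power in the Vandermonde-type denominators.
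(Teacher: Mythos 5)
Your proof is correct and follows essentially the same route as the paper: select $n+1$ points of the sublevel set with pairwise spacing at least $|E|\,|j-k|/n$, apply Lagrange interpolation at those nodes, bound the elementary symmetric functions by $\binom{n}{n-m}2^{n-m}$ and the Vandermonde denominators below by $j!\,(n-j)!\,(|E|/n)^n$, collapse via $\sum_j \binom{n}{j}=2^n$, and solve for $|E|$. The only cosmetic difference is how the well-spaced nodes are produced --- the paper slides the component intervals of $E$ together and picks equally spaced points in the resulting single interval, whereas you use the $1$-Lipschitz distribution function $m(x)=|E\cap[1,x]|$; these are interchangeable, and your observation that a plateau endpoint lands in the closed set $E$ (with the left endpoint for the top level) disposes of the only real subtlety.
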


This Lemma is due to Vinogradov \cite{V}. We postpone the proof of Lemma \ref{lm4} until after the end of the proof of the upper bound.
 
\bigskip
Consider the polynomial $P^\prime(2^mt)$ with coefficients $ja_j2^{m(j-1)}$, $1\leq j\leq d$. Clearly, 
$\max_{1\leq j\leq d}|ja_j2^{m(j-1)}|\geq |la_l2^{m(l-1)}|\geq( [\frac{d}{2}]+1)2^{m[\frac{d}{2}]} $. Using Lemma 4 and (\ref{eq14}), we get

$$\int_{\{t\in[1,+\infty):|P^\prime(t)| \leq \alpha\}}\frac{dt}{t}\leq c\alpha^\frac{1}{d-1} \sum_{m=0}^\infty \bigg(\frac{1}{([\frac{d}{2}]+1)2^{m[\frac{d}{2}]}}\bigg)^\frac{1}{d-1}\leq c \alpha^\frac{1}{d-1} .$$

Obviously, a similar estimate holds for $I_2 ^-$. Summing up the estimates we get
\begin{eqnarray*}
\bigg|\int_{\epsilon \leq |t| \leq R} {e^{iP(t)}\frac{dt}{t}}\bigg|\leq c+c\frac{d}{\alpha}+c\alpha^\frac{1}{d-1}+K_{[\frac{d}{2}]}.
\end{eqnarray*}
Optimizing in $\alpha$ we get that
\begin{eqnarray}\label{eq15}\bigg|\int_{\epsilon \leq |t| \leq R} {e^{iP(t)}\frac{dt}{t}}\bigg|\leq c +K_{[\frac{d}{2}]}
\end{eqnarray}
and hence 
$$K_d\leq c+ K_{[\frac{d}{2}]}.$$
In particular we have
$$K_{2^n}\leq c +K_{2^{n-1}} .$$
Using induction on $n$ we get that $K_{2^n} \leq c n$. It is now trivial to show the inequality for general d. Indeed, if $2^{n-1} < d \leq 2^n$ then $K_d\leq K_{2^n}\leq c n \leq c \log d$.

For the sake of completeness we give the proof of Lemma \ref{lm4}.
\medskip

\begin{proof}[Proof of Lemma \ref{lm4}]The set $E_\alpha =\{t\in[1,2]:|h(t)|\leq \alpha\}$ is a union of intervals. We slide them together to form a single interval $I$ of length $|E_\alpha|$ and pick $n+1$ equally spaced points  in $I$. If we slide the intervals back to their original position we end up with $n+1$ points  $\,x_0, x_1, x_2, \ldots, x_n\in E_\alpha$ which satisfy 
\begin{eqnarray}\label{eq16}
|x_j-x_k|\geq |E_\alpha|\frac{|j-k|}{n}.
\end{eqnarray}
The Lagrange polynomial which interpolates the values $h(x_0)$, $h(x_1)$,\ldots, $h(x_n)$ coincides with $h(x)$:
$$h(x)=\sum_{j=0} ^nh(x_j)\frac{(x-x_0)(x-x_1)\cdots(x-x_{j-1})(x-x_{j+1})\cdots(x-x_n)}{(x_j-x_0)(x_j-x_1)\cdots(x_j-x_{j-1})(x_j-x_{j+1})\cdots(x_j-x_n)} .$$
Therefore we get for the coefficients of $h$ that  
$$b_k=\sum_{j=0}^n h(x_j)\frac{(-1)^{n-k}\sigma_{n-k}(x_0,\ldots,\hat{x_j},\ldots,x_n)}{(x_j-x_0)(x_j-x_1)\cdots(x_j-x_{j-1})(x_j-x_{j+1})\cdots(x_j-x_n)} $$
for $k=0,1,\ldots,n$. In the above formula $\sigma_{n-k}(x_0,\ldots,\hat{x_j},\ldots,x_n)$ is the $(n-k)$-th elementary symmetric function of $x_0,\ldots,\hat{x_j},\ldots,x_n$ where $x_j$ is omitted. Using the estimate $\sigma_{n-k}(x_0,\ldots,\hat{x_j},\ldots,x_n)\leq \binom{n}{n-k}2^{n-k}$ together with (\ref{eq16}) we get that, for every $k=0,1,\ldots,n$,  
\begin{eqnarray*}
|b_k|&\leq& \binom{n}{n-k}2^{n-k}n^n \frac{\alpha}{|E_\alpha|^n} \sum_{j=0}^n \frac{1}{j!(n-j)!}\\
&=&\binom{n}{n-k}2^{2n-k}\frac{n^n}{n!} \frac{\alpha}{|E_\alpha|^n}\,\,\, \leq\,\,\, c\frac{8^n }{\sqrt{n}}\frac{n^n}{n!}\frac{\alpha}{|E_\alpha|^n},
\end{eqnarray*}
where we used the estimate $\binom{n}{n-k}\leq \binom{n}{[\frac{n}{2}]} \leq c\frac{2^{n}}{\sqrt n}$. Hence
$$\max_{0\leq k \leq n}|b_k|\leq c  \frac{8^n }{\sqrt{n}}\frac{n^n}{n!}\frac{\alpha}{|E_\alpha|^n}	$$
and solving with respect to $|E_\alpha|$ we get
$$|E_\alpha| \leq c \bigg(\frac{\alpha}{\max_{0\leq k \leq n}|b_k|}\bigg)^\frac{1}{n}  .$$
\end{proof}

\bibliographystyle{amsplain}

\bibliography{Stein_Wainger_bibl}

\providecommand{\bysame}{\leavevmode\hbox to3em{\hrulefill}\thinspace}
\providecommand{\MR}{\relax\ifhmode\unskip\space\fi MR }
\providecommand{\MRhref}[2]{%
  \href{http://www.ams.org/mathscinet-getitem?mr=#1}{#2}
}
\providecommand{\href}[2]{#2}
\begin{thebibliography}{1}

\bibitem{AKC}
G.~I. Arhipov, A.~A. Karacuba, and V.~N. {\v{C}}ubarikov, \emph{Trigonometric
  integrals}, Izv. Akad. Nauk SSSR Ser. Mat. \textbf{43} (1979), no.~5,
  971--1003, 1197. \MR{MR552548 (81f:10050)}

\bibitem{CWAW}
Anthony Carbery, Stephen Wainger, and James Wright, \emph{Personal
  communication}, 2005.

\bibitem{S}
E.~M. Stein, \emph{Oscillatory integrals in {F}ourier analysis}, Beijing
  lectures in harmonic analysis (Beijing, 1984), Ann. of Math. Stud., vol. 112,
  Princeton Univ. Press, Princeton, NJ, 1986, pp.~307--355. \MR{MR864375
  (88g:42022)}

\bibitem{SWA}
Elias~M. Stein and Stephen Wainger, \emph{The estimation of an integral arising
  in multiplier transformations.}, Studia Math. \textbf{35} (1970), 101--104.
  \MR{MR0265995 (42 \#904)}

\bibitem{V}
Ivan~Matveevi{\v{c}} Vinogradov, \emph{Selected works}, Springer-Verlag,
  Berlin, 1985, With a biography by K. K. Mardzhanishvili, Translated from the
  Russian by Naidu Psv [P. S. V. Naidu], Translation edited by Yu.\ A.
  Bakhturin. \MR{MR807530 (87a:01042)}

\bibitem{WA}
Stephen Wainger, \emph{Averages and singular integrals over lower-dimensional
  sets}, Beijing lectures in harmonic analysis (Beijing, 1984), Ann. of Math.
  Stud., vol. 112, Princeton Univ. Press, Princeton, NJ, 1986, pp.~357--421.
  \MR{MR864376 (89a:42026)}

\end{thebibliography}
\end{document}